\begin{document}
\renewcommand{\emptyset}{\varnothing}
\newtheorem{theorem}{Theorem}
\newtheorem{defn}[theorem]{Definition}
\newtheorem{conjecture}[theorem]{Conjecture}
\newtheorem{proposition}[theorem]{Proposition}
\newtheorem{question}[theorem]{Question}
\newtheorem{lemma}[theorem]{Lemma}
\newtheorem{cor}[theorem]{Corollary}
\newtheorem{obs}[theorem]{Observation}
\newtheorem{proc}[theorem]{Procedure}
\newcommand{\comments}[1]{} 
\def\Z{\mathbb Z}
\def\Za{\mathbb Z^\ast}
\def\Fq{{\mathbb F}_q}
\def\R{\mathbb R}
\def\N{\mathbb N}
\def\cH{\overline{\mathcal H}}
\def\cF{\mathcal F}

\title[White's Theorem]{An Exposition of White's characterization of Empty Lattice Tetrahedra}
\author{Mizan R. Khan}
\address{MRK: Department of Mathematical Sciences, Eastern Connecticut State University, Willimantic, CT 06226}
\email{khanm@easternct.edu}
\author{Karen M. Rogers}
\address{KMR: Department of Mathematics and Computer Science, Oxford College at Emory University, Oxford, GA 30054}
\email{karen.m.rogers@emory.edu}

\subjclass[2010]{Primary 11H06; Secondary 11P21, 52C07}
\date{}

\begin{abstract}
We give an exposition of White's characterization of empty lattice tetrahedra. In particular, we describe the second author's proof of White's theorem that appeared in her doctoral thesis~\cite{Rog}.
\end{abstract}

\maketitle

\section{Introduction}

The motivating example is the \emph{lattice} tetrahedron with vertices (0,0,0), (1,0,0), (0,1,0) and $(1,1,c)$ with $c$ being an arbitrary positive integer. We denote this tetrahedron as $T_{1,1,c}$. Regardless of the size of $c$ (and consequently the volume of $T_{1,1,c}$), $T_{1,1,c}$ does not contain any lattice points other than its vertices.  This is in surprising contrast to the situation in $\R^2$ where a \emph{lattice} triangle does not contain any lattice points, other than its vertices, if and only if it has area 1/2. (To see this we invoke Pick's theorem.)

Reeve~\cite{Ree} posed the problem of characterizing such tetrahedra. Some years later White~\cite{Wh} solved this problem. Over the years different authors have given proofs of White's theorem (see~\cite{B-vH,M-S,Rez1,Rez2,Sca}). The second author gave a proof of White's theorem in her doctoral dissertation~\cite{Rog}. In this article we give a detailed exposition of this proof. 

Before stating the relevant theorems we establish some notation and definitions. Let $a,b,c \in \Z$ with $0\leq a,b <c$. We will use $d$ to denote the integer
$$ d = (1-a-b) \mod c, 0 \leq d < c.$$ 
Furthermore, $T_{a,b,c}$ will denote the lattice tetrahedron with vertices $(0,0,0)$, $(1,0,0)$, $(0,1,0)$ and 
$(a,b,c)$. 

\begin{defn}
Following Reznick~\cite{Rez2}, we call a lattice polyhedron that does not contain any lattice points other than its vertices an \emph{empty lattice polyhedron}. Such a polyhedron belongs to a larger set of lattice polyhedra that do not contain any lattice points on their boundary other than the vertices. We call such polyhedra \emph{clean lattice polyhedra}.
\end{defn}

We insert a warning about the the terminology, particularly in the case of tetrahedra. Other names in the literature for empty tetrahedra are \emph{fundamental}, 
\emph{primitive}, \emph{Reeve}. 

\begin{defn} 
An \emph{affine unimodular map} is an affine map 
$$L: \R^3 \rightarrow \R^3 \textrm{ of the form } L(\vec{x}) = M\vec{x} + \vec{u},$$
where $M \in GL_3(\Z)$, $\det(M) =\pm 1$, and $\vec{u}\in \Z^3$.
\end{defn}

We now state the two theorems that we will prove.

\begin{theorem}\label{1st theorem}
Let $T$ be an empty lattice tetrahedron. Then there is an affine unimodular map $L$ such that $L(T)= T_{a,b,c}$, with $0\leq a,b <c$ and 
$\gcd(a,c)=\gcd(b,c)=\gcd(d,c)=1$.
\end{theorem}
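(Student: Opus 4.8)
The plan is to prove the theorem in two stages: a normalization stage that produces an affine unimodular map $L$ with $L(T)=T_{a,b,c}$ for some $0\le a,b<c$, and then a verification stage showing that emptiness of $T$ forces the three coprimality conditions.

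For the normalization, I would first apply a translation by a lattice vector to move one vertex of $T$ to the origin, so that $T$ has vertices $\vec{0}, \vec{v}_1, \vec{v}_2, \vec{v}_3 \in \Z^3$ with $\vec{v}_1, \vec{v}_2, \vec{v}_3$ linearly independent. The crucial claim is that there is an $M \in GL_3(\Z)$ with $M\vec{v}_1 = (1,0,0)$ and $M\vec{v}_2 = (0,1,0)$. To prove it, set $P = \operatorname{span}_{\R}(\vec{v}_1, \vec{v}_2)$ and $\Lambda = P \cap \Z^3$; this is a rank-two sublattice which is saturated in $\Z^3$, so $\Z^3/\Lambda$ is free and any $\Z$-basis of $\Lambda$ extends to a $\Z$-basis of $\Z^3$. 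Because $T$ is empty, the face with vertices $\vec{0}, \vec{v}_1, \vec{v}_2$ contains no lattice point of $\Z^3$, hence none of $\Lambda$, besides its three vertices, and Pick's theorem applied inside the plane lattice $\Lambda$ then shows that $\{\vec{v}_1, \vec{v}_2\}$ is a basis of $\Lambda$. Extending this basis by a third vector yields the desired $M$. Applying the associated unimodular linear map carries $T$ to a tetrahedron with vertices $\vec{0}, (1,0,0), (0,1,0), (a', b', c')$; nondegeneracy forces $c' \ne 0$, the reflection $(x,y,z) \mapsto (x,y,-z)$ (determinant $-1$, fixing the first three vertices) arranges $c' > 0$, and the shear $(x,y,z) \mapsto (x - \lfloor a'/c' \rfloor z,\; y - \lfloor b'/c' \rfloor z,\; z)$ replaces $(a',b')$ by its reduction modulo $c := c'$. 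The composition of all these maps is an affine unimodular $L$ with $L(T) = T_{a,b,c}$ and $0 \le a, b < c$.

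For the coprimality conditions, I would use that since $T_{a,b,c}$ is empty, each of its four triangular faces is an empty lattice triangle, and in particular has lattice-normalized area $1/2$ in the plane it spans. The face $F = \{(0,0,0), (0,1,0), (a,b,c)\}$ lies in the plane $cx = az$; writing $g = \gcd(a,c)$, the lattice of $\Z^3$-points in this plane has basis $\bigl(\tfrac{a}{g}, 0, \tfrac{c}{g}\bigr)$ and $(0,1,0)$, and in these coordinates $F$ becomes the triangle with vertices $(0,0), (0,1), (g,b)$, of area $g/2$; hence $g = 1$, i.e.\ $\gcd(a,c)=1$. The mirror computation for the face $\{(0,0,0),(1,0,0),(a,b,c)\}$, lying in the plane $cy = bz$, gives $\gcd(b,c)=1$. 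For the remaining slanted face $\{(1,0,0),(0,1,0),(a,b,c)\}$, translating by $(-1,0,0)$ brings it into the plane $cx + cy + (1-a-b)z = 0$ through the origin; the same reasoning, now with $k = \gcd(c, 1-a-b) = \gcd(c,d)$, identifies this face with a lattice triangle of area $k/2$ and forces $\gcd(c,d)=1$. Together with the normalization, this proves the theorem.

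The step I expect to be the main obstacle is the normalization claim that two edge vectors of the empty triangular face can be sent simultaneously to $(1,0,0)$ and $(0,1,0)$ by a single unimodular matrix. This rests on two standard but indispensable lattice facts: that a basis of a saturated sublattice extends to a basis of the ambient lattice, and that Pick's theorem holds verbatim in any rank-two lattice, so that an empty lattice triangle is unimodularly equivalent to the standard one. Once these are available, both halves of the argument reduce to elementary integer row and column operations and the evaluation of a handful of $2 \times 2$ determinants.
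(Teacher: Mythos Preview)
Your proof is correct and follows essentially the same route as the paper: translate a vertex to the origin, use emptiness of a face to see that two edge vectors form a primitive pair (the paper does this via the empty-parallelogram criterion $\gcd(\vec u\times\vec v)=1$, you via Pick's theorem in the saturated plane lattice), extend to a basis and apply the reflection and shear to reach $T_{a,b,c}$, and then read off the three $\gcd$ conditions from cleanness of the faces. The only cosmetic difference is that the paper packages the face analysis as a single cross-product computation (Corollary~\ref{clean-tet}), whereas you unwind it by writing down explicit bases for each plane lattice and computing areas.
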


\begin{theorem}[White]\label{2nd theorem} The lattice tetrahedron $T_{a,b,c}$ is empty if and only if $\gcd(a,c)=\gcd(b,c)=\gcd(d,c)=1$ and at least one of the following hold:
$$a=1,b=1,c=1,d=1.$$
\end{theorem}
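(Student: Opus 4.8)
The plan is to reduce emptiness of $T_{a,b,c}$ to an arithmetic statement about the residues of $ka,kb,kd$ modulo $c$, and then to analyse that statement. Intersecting $T_{a,b,c}$ with the horizontal plane $z=k$ and writing a point of the slice in barycentric coordinates with respect to the four vertices, one checks that $T_{a,b,c}$ has a lattice point other than a vertex at height $k$ if and only if the least nonnegative residues of $-ka$, $-kb$, $-kd$ modulo $c$ sum to exactly $c-k$; in particular $k$ must lie in $\{1,\dots,c-1\}$. Under the hypotheses $\gcd(a,c)=\gcd(b,c)=\gcd(d,c)=1$ those three residues are positive and their sum, being $\equiv -k\pmod c$, lies in $\{c-k,\,2c-k,\,3c-k\}$; replacing $k$ by $c-k$ negates each residue modulo $c$ and so interchanges the values $c-k$ and $3c-k$. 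Hence $T_{a,b,c}$ is empty if and only if this sum is always the middle value $2c-k$, that is,
\[
(ka\bmod c)+(kb\bmod c)+(kd\bmod c)=k+c\qquad\text{for every }k\in\{1,\dots,c-1\}.
\]
This criterion is symmetric in $a,b,d$; correspondingly, each transposition of $\{a,b,d\}$ is realized by an affine unimodular map permuting the vertices of $T_{a,b,c}$ (for instance $T_{a,b,c}\cong T_{d,b,c}$).

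Given the criterion, the ``if'' direction is a short check. It is vacuous for $c=1$. If $a=1$ then $ka\bmod c=k$, while $d=(-b)\bmod c$ forces $(kb\bmod c)+(kd\bmod c)=c$, so the left side equals $k+c$; the cases $b=1$ and $d=1$ then follow by the symmetry above.

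For ``only if'', suppose first that some gcd exceeds $1$, say $g:=\gcd(a,c)>1$. With $k=c/g$ one has $c\mid ka$, and writing $c=gc'$ gives $kb\bmod c=c'(b\bmod g)$; the slice $z=k$ then contains the lattice point with $x=ka/c$ and $y=\lceil kb/c\rceil$, so $T_{a,b,c}$ is not empty. The cases of $b$ and $d$ follow by the vertex symmetry, so we may assume all three gcd's are $1$. If $c=1$ we are done; otherwise suppose for contradiction that $a,b,c,d$ are all $\ge 2$. The criterion at $k=1$ forces $a+b+d=c+1$ (the only other possibility, $a+b+d=2c+1$, puts a lattice point in the slice $z=1$). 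Subtracting the criterion at $k$ from the one at $k+1$ shows that for $1\le k\le c-2$ exactly one ``carry'' occurs among $a,b,d$, where a carry for $a$ at step $k$ means $(ka\bmod c)+a\ge c$; summing, $a,b,d$ carry $a-1$, $b-1$, $d-1$ times respectively. Equivalently, the three mechanical words of slopes $a/c$, $b/c$, $d/c$ are complementary on the positions $1,\dots,c-2$.

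The crux is to show that no such configuration exists when $a,b,d\ge2$. No two of $a,b,d$ can exceed $c/2$ (else, since $a+b+d=c+1$, the third would be $<1$), and if all three are $\le(c-1)/2$ then the slice $z=2$ contains the lattice point $(1,1,2)$; hence exactly one of them, say $a$, satisfies $a>c/2$, and $b+d=c+1-a$. Complementarity then forces the $1$-positions of the words of slopes $b/c$ and $d/c$ to partition, on $\{1,\dots,c-2\}$, the $1$-positions of the word of slope $(c-a)/c$; re-indexing by those positions converts this into the same type of complementarity statement with the strictly smaller modulus $c-a$, and this Euclidean-style descent cannot recur forever, yielding the contradiction. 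Making the descent rigorous — verifying that the induced subwords really are complementary mechanical words whose parameters are coprime to the new modulus — is the step I expect to be the main obstacle. (Equivalently, the ``only if'' direction can be recast as the statement that an empty $T_{a,b,c}$ has lattice width $1$, which reduces to the same combinatorial fact.)
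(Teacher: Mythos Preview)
Your arithmetic criterion and your carry analysis coincide with the paper's Parts~1--3: the paper writes your criterion as $\langle ka/c\rangle+\langle kb/c\rangle+\langle kd/c\rangle-k/c=1$, packages your carries as $f_n(k)=\bigl[(k+1)n/c\bigr]-\bigl[kn/c\bigr]\in\{0,1\}$, and obtains $f_a+f_b+f_d\equiv 1$ on $\{1,\dots,c-2\}$ together with $f_n^{-1}(\{1\})=\{[jc/n]:1\le j\le n-1\}$ and $f_{c-n}=1-f_n$. Through the setup of the ``only if'' direction you are on the paper's track.

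The gap is exactly the one you flag, and it is genuine. Your descent requires that, after re-indexing by the $1$-positions $p_j=[jc/(c-a)]$ of $f_{c-a}$, the sequences $j\mapsto f_b(p_j)$ and $j\mapsto f_d(p_j)$ are again complementary mechanical words for the smaller modulus $c-a$, with numerators coprime to $c-a$. None of this is established: you have not said what the new slopes are, and $\gcd(b,c)=1$ does not give $\gcd(b,c-a)=1$, so even the well-posedness of the recursive step is unclear. This is a missing idea, not a routine check. The paper avoids any induction: assuming (after relabelling) $a>b>d\ge 2$, it lets $s$ be the index with $[c/d]=[sc/(c-a)]$; since the first $s-1$ elements of $f_{c-a}^{-1}(\{1\})$ must then all lie in $f_b^{-1}(\{1\})$, one gets $[(s-1)c/(c-a)]=[(s-1)c/b]$ and $[(s+1)c/(c-a)]\le [sc/b]$. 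Subtracting yields $2[c/(c-a)]\le [c/b]+1=[c/(c-a)]+1$, hence $[c/(c-a)]\le 1$, contradicting $a>c/2$. That single extremal-element inequality replaces your entire descent.
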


We now state definitions and background results that will be used to prove the two theorems.

\begin{defn} 
A set of lattice points $\{ \vec{v}_1, \ldots, \vec{v}_k \}$ in $\Z^n$ is said to be \emph{primitive} if it is a basis for the sublattice 
$\Z^n \cap ( \R\vec{v}_1 \oplus \dots \oplus \R \vec{v}_k)$. Geometrically, this means that $\{ \vec{v}_1, \ldots, \vec{v}_k \}$ is primitive if and only if the 
parallelepiped spanned by $\vec{v}_1, \ldots, \vec{v}_k $ is empty.
\end{defn}

The following is a list of standard results we will use. The proofs can be found in~\cite[Lectures V, VIII]{Sie}. However, we have rephrased some of the statements. Consequently the reader who consults~\cite{Sie} may need to read the relevant material carefully.

\begin{theorem}\label{background1} Every lattice has an integral basis.\end{theorem}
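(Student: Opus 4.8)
The plan is to induct on the dimension $n$ of the ambient space, peeling off one coordinate at a time. Let $\Lambda\subseteq\Z^n$ be a lattice, i.e.\ a subgroup of $\Z^n$ (the same projection argument works verbatim if one prefers to regard $\Lambda$ as a discrete subgroup of $\R^n$, once one notes that discrete subgroups of $\R$ are cyclic). For $n=1$ every subgroup of $\Z$ has the form $m\Z$, which has the one-element integral basis $\{m\}$, or the empty basis when $\Lambda=\{0\}$; so the base case is immediate.

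For the inductive step, let $\pi\colon\Z^n\to\Z$ be projection onto the last coordinate and consider $\pi(\Lambda)$, a subgroup of $\Z$. If $\pi(\Lambda)=\{0\}$, then $\Lambda$ sits inside $\Z^{n-1}\times\{0\}$ and the inductive hypothesis applies directly. Otherwise $\pi(\Lambda)=m\Z$ for some integer $m\geq 1$; fix $\vec{w}\in\Lambda$ with $\pi(\vec{w})=m$, and set $\Lambda_0=\ker(\pi|_\Lambda)=\Lambda\cap(\Z^{n-1}\times\{0\})$. By induction $\Lambda_0$ has an integral basis $\vec{v}_1,\dots,\vec{v}_k$. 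I claim $\vec{v}_1,\dots,\vec{v}_k,\vec{w}$ is an integral basis of $\Lambda$. Spanning: given $\vec{x}\in\Lambda$, write $\pi(\vec{x})=qm$ with $q\in\Z$; then $\vec{x}-q\vec{w}\in\Lambda_0$, hence is an integer combination of the $\vec{v}_i$, which exhibits $\vec{x}$ in the desired form. Independence over $\Z$: a relation $\sum c_i\vec{v}_i+c\,\vec{w}=\vec{0}$ forces $cm=0$ upon applying $\pi$, so $c=0$, and the independence of the $\vec{v}_i$ finishes the argument.

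There is no serious obstacle here; the only care needed is bookkeeping — making sure the base case and the degenerate subcases ($\Lambda=\{0\}$, or $\pi(\Lambda)=\{0\}$) are handled, and deliberately projecting onto a single coordinate (so the image lands in $\Z$, where the subgroup structure is elementary) rather than onto a hyperplane. An alternative route would be to extract a maximal $\Q$-linearly independent subset of $\Lambda$ and argue directly, or to invoke the Hermite normal form of an integer matrix whose rows generate $\Lambda$, but the one-coordinate induction is the most self-contained and is the version I would write up.
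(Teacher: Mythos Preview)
The paper does not supply its own proof of this background result; it simply cites Siegel~\cite{Sie}. Your inductive argument for subgroups of $\Z^n$ is correct and is essentially the classical one (it is the finite-rank case of the proof that a subgroup of a free abelian group is free, and amounts to putting a generating matrix into Hermite normal form one column at a time).

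One caution about your parenthetical remark: the argument does \emph{not} carry over verbatim to arbitrary discrete subgroups of $\R^n$, because the coordinate projection of a discrete subgroup need not be discrete. For instance, the lattice in $\R^2$ generated by $(1,0)$ and $(\sqrt{2},1)$ is discrete, but its projection onto the first coordinate is $\Z+\sqrt{2}\,\Z$, which is dense in $\R$; so the step ``$\pi(\Lambda)$ is a discrete subgroup of $\R$, hence cyclic'' breaks down. To treat general lattices in $\R^n$ one typically first reduces to the $\Z^n$ case---choose a maximal $\R$-linearly independent subset of $\Lambda$, use it as a new coordinate system, and observe (via discreteness and a pigeonhole/compactness argument) that $\Lambda$ has finite index over the sublattice these vectors generate---after which your induction applies. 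For the purposes of this paper everything lives in $\Z^3$, so your main argument is entirely adequate.
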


\begin{theorem}\label{background2} The property of being a lattice basis is preserved under the action of any unimodular transformation, that is, if $\vec{v}_1,\vec{v}_2,\ldots, \vec{v}_n$
is a basis for $\Z^n$ and $T:\R^n \rightarrow \R^n$ is an unimodular transformation, then $T\left(\vec{v}_1\right),T\left(\vec{v}_2\right),\ldots, T\left(\vec{v}_n\right)$ is also
a basis of $\Z^n$. Furthermore, given two lattice bases there is an unimodular transformation that maps one basis into the other.
\end{theorem}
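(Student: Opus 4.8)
The plan is to translate both assertions into matrix language and reduce everything to a single fact: an integer square matrix with determinant $\pm 1$ has an integer inverse. Throughout I regard a (linear) unimodular transformation $T$ as left multiplication by a matrix $M \in GL_n(\Z)$ with $\det M = \pm 1$, and I record each ordered basis $\vec{v}_1, \ldots, \vec{v}_n$ of $\Z^n$ as the matrix $B = [\vec{v}_1 \mid \cdots \mid \vec{v}_n]$ whose columns are the basis vectors, so that $B\vec{e}_j = \vec{v}_j$ for the standard basis vectors $\vec{e}_j$.

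The engine of the proof, which I would isolate first, is the following pair of observations. First, an integer matrix $A$ has $\det A = \pm 1$ if and only if $A^{-1}$ also has integer entries: the reverse direction follows from $\det A \cdot \det A^{-1} = 1$ with both factors integral, and the forward direction from the adjugate formula $A^{-1} = (\det A)^{-1}\operatorname{adj}(A)$, since $\operatorname{adj}(A)$ is integral and $\det A = \pm 1$. Second, the columns of an integer matrix $B$ form a basis of $\Z^n$ precisely when $\det B = \pm 1$: if they form a basis then each $\vec{e}_j$ is an integer combination of the columns, yielding an integer matrix $C$ with $BC = I$ and hence $\det B = \pm 1$; conversely, if $\det B = \pm 1$ then $B^{-1}$ is integral, so every $\vec{w} \in \Z^n$ equals $B(B^{-1}\vec{w})$ with $B^{-1}\vec{w} \in \Z^n$, showing the columns span $\Z^n$, while independence is automatic from invertibility of $B$.

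With these in hand the first assertion is immediate. If $\vec{v}_1, \ldots, \vec{v}_n$ is a basis of $\Z^n$, then $\det B = \pm 1$, and the images $T(\vec{v}_j)$ are exactly the columns of the product $MB$. Since $M, B \in GL_n(\Z)$, the matrix $MB$ is integral with $\det(MB) = \det M \cdot \det B = \pm 1$, so by the criterion above its columns form a basis of $\Z^n$. For the second assertion, let $\vec{v}_1, \ldots, \vec{v}_n$ and $\vec{w}_1, \ldots, \vec{w}_n$ be two bases recorded as $B$ and $C$; both have determinant $\pm 1$, so $B^{-1}$ is integral and $M = CB^{-1}$ lies in $GL_n(\Z)$ with $\det M = \pm 1$. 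The transformation $T(\vec{x}) = M\vec{x}$ is then unimodular and satisfies $T(\vec{v}_j) = CB^{-1}\vec{v}_j = C\vec{e}_j = \vec{w}_j$, so it carries the first basis onto the second.

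I do not anticipate a serious obstacle: once the adjugate-formula observation is established, everything else is routine bookkeeping with determinants. The only point demanding genuine care is to avoid circularity, namely to prove both the integrality of the inverse and the determinant criterion for a basis directly from the cofactor formula and the multiplicativity of the determinant, rather than assuming either of them, since these two facts carry the entire substantive content of the statement.
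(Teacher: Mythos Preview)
Your argument is correct and is the standard matrix-theoretic proof: characterize bases of $\Z^n$ as column systems of integer matrices with determinant $\pm 1$ via the adjugate formula, then both assertions reduce to multiplicativity of the determinant and the fact that $GL_n(\Z)$ is a group. There is nothing to compare against, since the paper does not supply its own proof of this statement; it is listed among the background results whose proofs are deferred to Siegel's \emph{Lectures on the Geometry of Numbers}. Your self-contained treatment is exactly what one would expect a reader to reconstruct from that reference.
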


\begin{theorem} \label{background3} Let $\{\vec{v}_1, \ldots, \vec{v}_n\}$ be a linearly independent set of elements of $\Z^n$, and let $H=\Z\vec{v}_1\oplus \ldots \oplus \Z\vec{v}_n$. 
Then the order of the quotient group $\Z^n/H$  equals
$$ \#(\Z^n/H) = |\det(\vec{v}_1, \ldots, \vec{v}_n)|.$$
\end{theorem}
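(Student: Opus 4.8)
\emph{Proof proposal.} The statement is the classical fact that a full-rank sublattice $H\le\Z^n$ has index equal to the absolute value of the determinant of any matrix whose columns form a basis of $H$. The plan is to reduce the matrix $M=(\vec v_1\mid\cdots\mid\vec v_n)$ to a diagonal form by integral row and column operations, keep track of what each operation does to the index and to the determinant, and then read off both quantities from the diagonal form.

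First I would invoke the Smith normal form: there exist $P,Q\in GL_n(\Z)$ with $\det P,\det Q=\pm1$ such that $PMQ=D$, where $D=\operatorname{diag}(d_1,\ldots,d_n)$ with each $d_i$ a positive integer (nonsingularity of $M$, which follows from the linear independence of the $\vec v_i$, forces $d_i\ge1$). If one prefers not to quote Smith normal form, the same diagonalization is obtained by an explicit induction on $n$ in the spirit of~\cite{Sie}: clear out the first row and column by running the Euclidean algorithm on their entries — swapping rows/columns to bring an entry of smallest absolute value into the corner, then subtracting integer multiples — and recurse on the remaining $(n-1)\times(n-1)$ block.

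Next I would track the two invariants. For the determinant, $|\det M|=|\det P|^{-1}\,|\det D|\,|\det Q|^{-1}=|\det D|=d_1d_2\cdots d_n$. For the index, note that the map $\vec x\mapsto P\vec x$ is an automorphism of $\Z^n$ by Theorem~\ref{background2}, and it carries $H=M\Z^n$ onto $PM\Z^n=PMQ\Z^n=D\Z^n$, where the first equality uses $Q\Z^n=\Z^n$ and the second uses $PMQ=D$. Hence $P$ induces a group isomorphism $\Z^n/H\cong\Z^n/D\Z^n$, and the latter group is visibly $\bigoplus_{i=1}^n\Z/d_i\Z$, of order $d_1\cdots d_n$. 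Comparing the two computations gives $\#(\Z^n/H)=d_1\cdots d_n=|\det(\vec v_1,\ldots,\vec v_n)|$.

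The only real obstacle is the reduction step itself — producing the diagonal form — and, more precisely, checking that each elementary integral operation is realized by multiplication by a matrix in $GL_n(\Z)$ and that the process terminates; termination holds because the smallest nonzero absolute value among the entries being manipulated strictly decreases until it divides all the others. An alternative route that sidesteps this linear algebra is geometric: the set $\Z^n\cap\Pi$, where $\Pi$ is the half-open parallelepiped spanned by $\vec v_1,\ldots,\vec v_n$, is a complete set of coset representatives for $\Z^n/H$, so $\#(\Z^n/H)=\#(\Z^n\cap\Pi)$; and counting lattice points in a large cube $[0,N]^n$ tiled by the $H$-translates of $\Pi$ yields $\#(\Z^n\cap\Pi)=\operatorname{vol}(\Pi)=|\det(\vec v_1,\ldots,\vec v_n)|$. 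In that approach the obstacle shifts to controlling the boundary discrepancy in the count, which is $O(N^{n-1})$ against a main term of order $N^n$.
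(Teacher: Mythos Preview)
Your proof is correct, but the paper itself does not prove Theorem~\ref{background3}: it is listed among the ``standard results'' whose proofs are delegated to Siegel~\cite[Lectures~V,~VIII]{Sie}, so there is no in-paper argument to compare against. The Smith normal form reduction you give (and the alternative geometric coset-count you sketch) are both standard routes to this fact, and either would be accepted here.
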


\begin{theorem}\label{background4} Let $\{\vec{v}_1, \ldots, \vec{v}_r\}$ be a primitive set of $\Z^n$. Then $\{\vec{v}_1, \ldots, \vec{v}_r\}$ can be extended to a basis of $\Z^n$.
\end{theorem}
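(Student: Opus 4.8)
The plan is to manufacture the $n-r$ missing vectors by passing to a quotient lattice. Put $V=\R\vec{v}_1\oplus\cdots\oplus\R\vec{v}_r$ and $L=\Z^n\cap V$. By the definition of a primitive set, $\{\vec{v}_1,\ldots,\vec{v}_r\}$ is \emph{already} an integral basis of $L$, so it suffices to produce lattice points $\vec{w}_1,\ldots,\vec{w}_{n-r}\in\Z^n$ whose residues form a basis of the quotient group $\Z^n/L$. Granting this, any $\vec{x}\in\Z^n$ agrees, modulo $L$, with an integer combination of the $\vec{w}_j$, hence is an integer combination of $\vec{v}_1,\ldots,\vec{v}_r,\vec{w}_1,\ldots,\vec{w}_{n-r}$; and a spanning set of $n$ elements of $\Z^n$ is automatically a basis (a surjective $\Z$-linear map $\Z^n\to\Z^n$ has an integer right inverse, so its determinant is $\pm1$ and it is injective as well), so we are done.

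Everything therefore reduces to showing that $\Z^n/L$ is free of rank $n-r$. The first step is to verify it has no torsion: if $k\vec{x}\in L$ for some integer $k\ge 1$ and some $\vec{x}\in\Z^n$, then $k\vec{x}\in V$, hence $\vec{x}=\tfrac1k(k\vec{x})\in V$ since $V$ is a linear subspace, hence $\vec{x}\in\Z^n\cap V=L$, so the residue of $\vec{x}$ was already trivial. A finitely generated torsion-free abelian group is free, so $\Z^n/L\cong\Z^m$ for some $m\ge 0$, and comparing ranks across the exact sequence $0\to L\to\Z^n\to\Z^n/L\to 0$ — the rank of $L$ being $r$, since $\{\vec{v}_i\}$ is a basis of it — gives $m=n-r$. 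Choosing any integral basis of $\Z^n/L$ and lifting each of its elements to a point of $\Z^n$ supplies the required $\vec{w}_1,\ldots,\vec{w}_{n-r}$, and by the previous paragraph the theorem follows.

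The hypothesis of primitivity enters in one place only, the torsion-freeness step, which is therefore the crux; everything else is formal module theory. A reader who prefers to avoid the structure theorem can run a parallel matrix argument: let $A$ be the $n\times r$ integer matrix with columns $\vec{v}_1,\ldots,\vec{v}_r$, and check that $\{\vec{v}_i\}$ is primitive exactly when the greatest common divisor of the $r\times r$ minors of $A$ equals $1$. The Smith normal form of $A$ is then the $n\times r$ matrix with $I_r$ stacked above a zero block, say $A=UBW$ with $U\in GL_n(\Z)$ and $W\in GL_r(\Z)$; so $\vec{v}_1,\ldots,\vec{v}_r$ span the same sublattice of $\Z^n$ as the first $r$ columns of $U$ — the two bases of that sublattice differ by $W\in GL_r(\Z)$ — and adjoining the remaining $n-r$ columns of $U$ yields a $\Z$-basis of $\Z^n$ extending $\{\vec{v}_i\}$. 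In this version the essential point is instead the minor-gcd reformulation of primitivity; either way, only the standard lattice theory collected above is needed, together with the observation that $r<n$ is the only nontrivial case (if $r=n$, primitive already means basis).
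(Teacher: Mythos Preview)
Your argument is correct. Note, however, that the paper does not supply its own proof of this theorem: it is listed among the ``standard results'' whose proofs are deferred to Siegel's \emph{Lectures on the Geometry of Numbers}, so there is no in-paper argument to compare against. Your quotient-lattice approach (and the Smith-normal-form variant you sketch) is precisely the standard route one finds in such references.

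One small expository correction: you write that ``the hypothesis of primitivity enters in one place only, the torsion-freeness step.'' In fact it is the other way around. Your torsion-freeness argument uses only that $L=\Z^n\cap V$ with $V$ a linear subspace; it would go through verbatim even if the $\vec{v}_i$ generated a proper sublattice of $L$. Primitivity is instead used---and is essential---at the very first step, where you assert that $\{\vec{v}_1,\ldots,\vec{v}_r\}$ is already a basis of $L$. Without that, the construction would produce a basis of $\Z^n$ containing \emph{some} basis of $L$, but not the given vectors $\vec{v}_i$ themselves. The mathematics is unaffected by the slip, but the commentary should be amended.
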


We mention an interesting fact that emerges in the course of proving White's theorem. From Theorem~\ref{background3} it follows that if $T_{a,b,c}$ is empty, then the parallelepiped spanned by $(1,0,0),$ $(0,1,0),$ $(a,b,c)$ contains $(c-1)$ 
lattice points in its interior. In the course of proving Theorem~\ref{2nd theorem} we will find that \emph{all of these points are coplanar}! More precisely, we have the following.

\begin{cor}\label{coplanar}
Let $P_{a,b,c}$ denote the parallelepiped spanned by $(1,0,0),$ $(0,1,0)$ and $(a,b,c)$. If $T_{a,b,c}$ is empty, then $P_{a,b,c}$ contains $(c-1)$ lattice points in its interior. If $a=1$, then all of these lattice points lie on the plane $x=1$; if $b=1$, then all of these lattice points lie on the plane $y=1$; if $d=1$, then all of these lattice points lie on the plane $x+y-z=1$.
\end{cor}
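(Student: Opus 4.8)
The plan is to write down an explicit list of the lattice points lying in a half-open version of the parallelepiped and then read off the relevant coordinates. Set $v_1=(1,0,0)$, $v_2=(0,1,0)$, $v_3=(a,b,c)$, and let $\Pi=\{t_1v_1+t_2v_2+t_3v_3:\ 0\le t_i<1\}$ be the corresponding half-open parallelepiped. Since $\Pi$ meets each coset of the sublattice $H=\Z v_1\oplus\Z v_2\oplus\Z v_3$ exactly once, Theorem~\ref{background3} gives $\#(\Pi\cap\Z^3)=|\det(v_1,v_2,v_3)|=c$. For a lattice point $t_1v_1+t_2v_2+t_3v_3\in\Pi$ the $z$-coordinate equals $t_3c$, which is an integer $j$ with $0\le j<c$; this forces $t_3=j/c$, and then integrality of the first two coordinates $t_1+ja/c$ and $t_2+jb/c$ together with $t_1,t_2\in[0,1)$ forces $t_1=\{-ja/c\}$ and $t_2=\{-jb/c\}$, where $\{x\}$ denotes the fractional part of $x$. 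Using $x+\{-x\}=\lceil x\rceil$, the $c$ lattice points of $\Pi$ are therefore exactly
$$ P_j=\bigl(\lceil ja/c\rceil,\ \lceil jb/c\rceil,\ j\bigr),\qquad j=0,1,\ldots,c-1. $$

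Next I would identify the interior lattice points of the closed parallelepiped $P_{a,b,c}$. A lattice point $t_1v_1+t_2v_2+t_3v_3$ lies in the interior precisely when $0<t_i<1$ for every $i$, and then it automatically lies in $\Pi$, so it equals some $P_j$. The point $P_0=(0,0,0)$ is a vertex, hence not interior. For $0<j<c$ we have $t_3=j/c\in(0,1)$, and since $T_{a,b,c}$ is empty Theorem~\ref{2nd theorem} yields $\gcd(a,c)=\gcd(b,c)=1$, so $c\nmid ja$ and $c\nmid jb$, whence $t_1=\{-ja/c\}$ and $t_2=\{-jb/c\}$ both lie in $(0,1)$. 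Thus the interior lattice points of $P_{a,b,c}$ are exactly $P_1,\ldots,P_{c-1}$; in particular there are $c-1$ of them.

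The three coplanarity claims now follow by substitution into the formula for $P_j$ with $1\le j\le c-1$. If $a=1$, then $0<j/c<1$ gives $\lceil ja/c\rceil=\lceil j/c\rceil=1$, so $P_j$ has $x$-coordinate $1$; the case $b=1$ is identical with the roles of $x$ and $y$ interchanged. If $d=1$, then $1-a-b\equiv 1\pmod c$, i.e.\ $a+b\equiv 0\pmod c$; since $d=1$ forces $c\ge 2$ and the gcd conditions exclude $a=b=0$, we must have $a+b=c$, hence $jb/c=j-ja/c$ and $ja/c+jb/c=j$. Because $\gcd(a,c)=1$ and $0<j<c$, the number $ja/c$ is not an integer, so $\lceil ja/c\rceil+\lceil jb/c\rceil=\lceil ja/c\rceil+\lceil j-ja/c\rceil=j+1$, and therefore the value of $x+y-z$ at $P_j$ is $(j+1)-j=1$.

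I expect the only mildly delicate parts to be the bookkeeping that each lattice point of $\Pi$ is pinned down uniquely by its $z$-coordinate, so that $P_0,\ldots,P_{c-1}$ is a complete and irredundant list, and the small edge-case check that $d=1$ forces $a+b=c$ rather than $a+b=0$. Neither is a genuine obstacle: once they are in place, the three coplanarity identities are one-line computations.
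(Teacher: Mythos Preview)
Your argument is correct and follows essentially the same route as the paper: both write down the explicit list of interior lattice points of $P_{a,b,c}$ indexed by $k=1,\ldots,c-1$ and then read off the coordinates under each of the hypotheses $a=1$, $b=1$, $d=1$. The only difference is cosmetic --- the paper records the points via fractional parts as $\langle k(c-a)/c\rangle\,e_1+\langle k(c-b)/c\rangle\,e_2+(k/c)(a,b,c)$ (this is formula~\eqref{eq:sys-eq-lat-pt}, established earlier in the proof of Part~1), while you package the same points as $(\lceil ka/c\rceil,\lceil kb/c\rceil,k)$; the identity $\{-x\}+x=\lceil x\rceil$ makes these coincide, and your ceiling computation $\lceil ja/c\rceil+\lceil j(c-a)/c\rceil=j+1$ in the $d=1$ case is exactly the paper's invocation of identity~\eqref{eq:keyidfrac}.
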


\noindent {\bf Warning:} The co-planarity of these lattice points was mentioned in an article of the first author~\cite[Theorem 3.2]{Kh}. Unfortunately the description of the planes in~\cite{Kh} is completely incorrect! The author should have done his homework and not just relied on his faulty visualization skills!!

\noindent {\bf Acknowledgements.} This article is a variant of an earlier manuscript drafted and submitted in the early 1990's. It was written in collaboration with Therese Hart who did the initial calculations that led us to discover White's result. We withdrew the article after discovering that White had anticipated our main discovery three decades previously. We would like to 
express our gratitude to Mel Nathanson who encouraged us to write this expository piece.

\section{Proofs}

We begin with some notation. Let $\vec{u}= (u_1,u_2,u_3) \in \Z^3$. We will denote the integer $\gcd(u_1,u_2,u_3)$ by $\gcd(\vec{u})$. Occasionally we will use $e_1,e_2$ and $e_3$ to denote the vectors $(1,0,0)$, $(0,1,0)$, and $(0,0,1)$. 

\begin{proposition}\label{useful prop}
Let $\vec{u}, \vec{v}$ be two linearly independent elements in $\Z^3$. The following statements are equivalent.
\begin{enumerate}

\item P, the parallelogram spanned by $\vec{u}$ and $\vec{v}$ is an empty parallelogram.

\item T, The triangle spanned by $\vec{u}$ and $\vec{v}$ is an empty triangle.

\item $\gcd(\vec{u} \times \vec{v})=1$.
\end{enumerate}
\end{proposition}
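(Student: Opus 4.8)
I would prove the equivalences by establishing $(3)\Rightarrow(1)\Rightarrow(2)\Rightarrow(3)$, with the cross product computation at the heart of it. Write $\vec{w} = \vec{u}\times\vec{v}$ and let $g = \gcd(\vec{w})$. The key algebraic fact is that $g$ equals the index of the sublattice $\Z\vec{u}\oplus\Z\vec{v}$ inside the full two-dimensional lattice $\Lambda = \Z^3\cap(\R\vec{u}\oplus\R\vec{v})$ that it spans; equivalently, $\{\vec{u},\vec{v}\}$ is a primitive set exactly when $g=1$. This is the bridge between the number-theoretic condition (3) and the geometric emptiness conditions (1) and (2).

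**Proving $g = [\Lambda : \Z\vec{u}\oplus\Z\vec{v}]$.** This is the step I expect to be the main obstacle, since it is the one genuinely substantive claim; the rest is bookkeeping. I would argue as follows. The plane $\R\vec{u}\oplus\R\vec{v}$ is exactly the set of $\vec{x}\in\R^3$ with $\vec{x}\cdot\vec{w}=0$. Extend a basis of $\Lambda$: by Theorem~\ref{background1}, $\Lambda$ has an integral basis $\{\vec{p},\vec{q}\}$, and since $\{\vec{p},\vec{q}\}$ is a primitive set in $\Z^3$, by Theorem~\ref{background4} it extends to a basis $\{\vec{p},\vec{q},\vec{r}\}$ of $\Z^3$. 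Apply the unimodular change of coordinates sending this basis to $e_1,e_2,e_3$ (Theorem~\ref{background2}); the cross product transforms by the inverse-transpose (up to the sign of the determinant, which is harmless since $g$ only sees absolute values), so computing $\gcd$ of a cross product is invariant under unimodular maps. In the new coordinates $\vec{p}=e_1$, $\vec{q}=e_2$, so $\Lambda$ becomes the $xy$-plane lattice, and $\vec{u},\vec{v}$ become integer vectors lying in the $xy$-plane, say $\vec{u}'=(u_1',u_2',0)$ and $\vec{v}'=(v_1',v_2',0)$. Then $\vec{u}'\times\vec{v}' = (0,0,\,u_1'v_2'-u_2'v_1')$, so $\gcd(\vec{u}'\times\vec{v}') = |u_1'v_2'-u_2'v_1'| = |\det(\vec{u}',\vec{v}')|$ (as $2\times2$ determinant), which by Theorem~\ref{background3} applied in $\Z^2$ is exactly $[\Lambda:\Z\vec{u}'\oplus\Z\vec{v}'] = [\Lambda:\Z\vec{u}\oplus\Z\vec{v}]$. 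That establishes the bridge.

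**Assembling the equivalences.** For $(3)\Rightarrow(1)$: if $g=1$ then $\{\vec{u},\vec{v}\}$ is primitive, so $\Z\vec{u}\oplus\Z\vec{v} = \Lambda$; any lattice point in the parallelogram $P$ lies in $\R\vec{u}\oplus\R\vec{v}$, hence in $\Lambda = \Z\vec{u}\oplus\Z\vec{v}$, and a point of $\Z\vec{u}\oplus\Z\vec{v}$ of the form $\alpha\vec{u}+\beta\vec{v}$ with $0\le\alpha,\beta\le1$ and $\alpha,\beta\in\Z$ must be a vertex — so $P$ is empty. The implication $(1)\Rightarrow(2)$ is immediate because $T\subset P$ (the triangle is half the parallelogram), so emptiness of $P$ forces emptiness of $T$. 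For $(2)\Rightarrow(3)$, I argue the contrapositive: if $g\ge2$, then $[\Lambda:\Z\vec{u}\oplus\Z\vec{v}]\ge2$, so there is a lattice point $\vec{z}\in\Lambda\setminus(\Z\vec{u}\oplus\Z\vec{v})$; writing $\vec{z}=\alpha\vec{u}+\beta\vec{v}$ with $\alpha,\beta\in\R$, reduce modulo the lattice $\Z\vec{u}\oplus\Z\vec{v}$ so that $0\le\alpha,\beta<1$ with $(\alpha,\beta)\ne(0,0)$; the point $\alpha\vec{u}+\beta\vec{v}$ then lies in the half-open parallelogram. I then need to nudge it into the closed triangle $T$: if $\alpha+\beta\le1$ it already lies in $T$ and is not a vertex, done; if $\alpha+\beta>1$, replace $\vec{z}$ by $(\vec{u}+\vec{v})-\vec{z} = (1-\alpha)\vec{u}+(1-\beta)\vec{v}$, which is again a lattice point (as $\vec{u}+\vec{v}\in\Z^3$), lies in $T$, and is not a vertex since $(\alpha,\beta)\notin\{(0,0),(1,0),(0,1)\}$ after reduction. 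This contradicts emptiness of $T$, completing the cycle. The one point requiring a little care is ensuring the produced point is genuinely non-vertex in the boundary case $\alpha+\beta=1$, which is handled by the reduction keeping $(\alpha,\beta)$ off the three vertex parameter-pairs.
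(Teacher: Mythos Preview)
Your proof is correct, but it takes a different route from the paper's. The paper argues $(1)\Leftrightarrow(2)$ via the same reflection $\vec{x}\mapsto\vec{u}+\vec{v}-\vec{x}$ you use, and then handles $(1)\Leftrightarrow(3)$ directly through the scalar triple product identity $\det(\vec{u},\vec{v},\vec{w})=(\vec{u}\times\vec{v})\cdot\vec{w}$: for $(3)\Rightarrow(1)$, B\'ezout supplies $\vec{w}\in\Z^3$ with $(\vec{u}\times\vec{v})\cdot\vec{w}=1$, so $\{\vec{u},\vec{v},\vec{w}\}$ is a basis of $\Z^3$ by Theorem~\ref{background3} and the parallelepiped (hence $P$) is empty; for $(1)\Rightarrow(3)$, primitivity lets one extend to a basis (Theorem~\ref{background4}), whence $|(\vec{u}\times\vec{v})\cdot\vec{w}|=1$ forces the gcd to be $1$. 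By contrast, you invest in proving the stronger intermediate fact $\gcd(\vec{u}\times\vec{v})=[\Lambda:\Z\vec{u}\oplus\Z\vec{v}]$ via a unimodular coordinate change, which requires checking that $\gcd$ of a cross product is unimodularly invariant. What the paper's approach buys is brevity and the avoidance of any coordinate change or transformation law for the cross product; what your approach buys is a reusable quantitative statement (the index formula) that is strictly more informative than the proposition itself, at the cost of a longer setup.
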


\begin{proof}
Clearly (1) $\Rightarrow$ (2). We prove the contrapositive to demonstrate that (2) $\Rightarrow$ (1). We assume that $P$ contains a lattice point $\vec{x}$ that is not a vertex of $P$.  
Then either $\vec{x}$ or $(\vec{u}+\vec{v}-\vec{x})$ lies in $T$. Since neither lattice point can be a vertex of $T$, we conclude that $T$ is not an empty triangle.

We now turn to proving that (1) and (3) are equivalent.

(3) $\Rightarrow$ (1): Since $\gcd(\vec{u}\times\vec{v}) =1$, there exists, by the Extended Euclidean algorithm, $\vec{w}\in \Z^3$ 
such that $(\vec{u}\times\vec{v})\cdot \vec{w} =1 = \det(\vec{u},\vec{v},\vec{w})$. 
By Theorem~\ref{background3}, $\vec{u},\vec{v},\vec{w}$ is a basis of $\Z^3$,
and consequently they span an empty parallelepiped. We conclude that $P$ is an empty parallelogram. 

(1) $\Rightarrow$ (3): Since $P$ is an empty parallelogram, $\{\vec{u}, \vec{v}\}$ is a primitive set of $\Z^3$, and consequently by Theorem~\ref{background4} there is a lattice point 
$\vec{w}$ such that $\vec{u},\vec{v},\vec{w}$ is a basis of $\Z^3$. Consequently $|\det(\vec{u},\vec{v},\vec{w})|=1$. Since 
$\det(\vec{u},\vec{v},\vec{w}) = (\vec{u}\times\vec{v})\cdot \vec{w}$, we conclude that $\gcd(\vec{u}\times \vec{v}) =1$.
\end{proof}

\begin{cor}\label{clean-tet}
The tetrahedron $T_{a,b,c}$ is clean if and only if $\gcd(a,c)=\gcd(b,c)=\gcd(d,c)=1$.
\end{cor}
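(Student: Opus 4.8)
The plan is to reduce the cleanliness of $T_{a,b,c}$ to the emptiness of each of its four triangular faces and then apply Proposition~\ref{useful prop} to each face. First I would observe that since $c\geq 1$ the tetrahedron is non-degenerate, so its boundary is the union of four genuine closed triangular faces; a lattice point on the boundary lies on at least one of these closed faces (edges included). Hence $T_{a,b,c}$ is clean if and only if each of its four closed faces contains no lattice point other than its three vertices, i.e.\ if and only if all four faces are empty lattice triangles. Translating one vertex of a face to the origin preserves the lattice and the property of being empty, so each face becomes a triangle spanned by two linearly independent lattice vectors, to which Proposition~\ref{useful prop} applies.

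Next I would run through the four faces, computing in each case the cross product of the two spanning vectors. The face with vertices $(0,0,0),(1,0,0),(0,1,0)$ is spanned by $e_1,e_2$, and $e_1\times e_2=e_3$ has $\gcd=1$, so this face is always empty. The face through $(0,0,0),(1,0,0),(a,b,c)$ is spanned by $e_1$ and $(a,b,c)$, with $e_1\times(a,b,c)=(0,-c,b)$, so by Proposition~\ref{useful prop} it is empty iff $\gcd(b,c)=1$. Symmetrically, the face through $(0,0,0),(0,1,0),(a,b,c)$ is spanned by $e_2$ and $(a,b,c)$, with $e_2\times(a,b,c)=(c,0,-a)$, hence empty iff $\gcd(a,c)=1$. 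Finally, translating the face with vertices $(1,0,0),(0,1,0),(a,b,c)$ by $-e_1$ yields the triangle spanned by $(-1,1,0)$ and $(a-1,b,c)$, whose cross product is $(c,c,1-a-b)$; since $\gcd(c,c,1-a-b)=\gcd(c,1-a-b)=\gcd(c,d)$ (using $d\equiv 1-a-b \pmod c$), this face is empty iff $\gcd(d,c)=1$. Combining the four cases gives that $T_{a,b,c}$ is clean iff $\gcd(a,c)=\gcd(b,c)=\gcd(d,c)=1$.

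I do not expect a genuine obstacle: once Proposition~\ref{useful prop} is available, the argument is essentially bookkeeping. The two points deserving a word of care are the reduction step — noting that controlling all four \emph{closed} faces controls the entire boundary, edges included — and the harmless replacement of $1-a-b$ by $d$ inside the gcd, justified by $d\equiv 1-a-b\pmod{c}$.
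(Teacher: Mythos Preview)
Your proposal is correct and follows essentially the same approach as the paper: reduce cleanliness of $T_{a,b,c}$ to emptiness of its four faces and apply Proposition~\ref{useful prop} via the cross-product criterion, obtaining exactly the same three gcd conditions. Your write-up is slightly more explicit (noting that the closed faces cover the whole boundary and justifying the replacement of $1-a-b$ by $d$ inside the gcd), but the logic and computations are the same.
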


\begin{proof} Let $\triangle_1,\triangle_2,\triangle_3,\triangle_4$ denote the faces of $T_{a,b,c}$ where $\triangle_1$ is the triangle spanned by $e_1$ and $e_2$; $\triangle_2$ is the triangle spanned by $e_1$ and $(a,b,c)$; $\triangle_3$ is the triangle spanned by $e_2$ and $(a,b,c)$; and $\triangle_4$ is the triangle spanned by $(e_2-e_1)$ and $((a,b,c)-e_1)$. $T_{a,b,c}$ is a clean tetrahedron if and only if $\triangle_1,\triangle_2,\triangle_3$ and $\triangle_4$ are all empty lattice triangles. Clearly $\triangle_1$ is an empty triangle. By 
Proposition~\ref{useful prop} the triangles $\triangle_2,\triangle_3,\triangle_4$ are empty if and only if 
$$ \gcd(e_1\times (a,b,c))=\gcd(e_2 \times (a,b,c)) = \gcd((e_2-e_1)\times ((a,b,c)-e_1))=1,$$
that is, $\gcd(b,c)=\gcd(a,c)=\gcd(d,c)=1.$
\end{proof}
 
\begin{proof}[Proof of Theorem~\ref{1st theorem}]
Let $T$ be an empty lattice tetrahedron in $\R^3$. Without loss of generality we may assume that the origin is one of the vertices and the other 3 vertices are 
$\vec{u}, \vec{v}$ and $\vec{w}$. Since the triangle spanned by $\vec{u}$ and $\vec{v}$ is empty, by Proposition ~\ref{useful prop}, the same holds for the parallelogram spanned by
$\vec{u}$ and $\vec{v}$. Therefore
$ \{\vec{u}, \vec{v} \} $ is a primitive set of $\Z^3$, and by Theorem~\ref{background4} can be extended to a basis of $\Z^3$, $\vec{u},\vec{v}, \vec{x}$. Now by Theorem~\ref{background2} we have a unimodular transformation $L_1$ 
such that $L_1(\vec{u})=e_1$, $L_2(\vec{v})=e_2$, and $L_3(\vec{x})=e_3$. Under this transformation we see that the tetrahedron $T$ is equivalent to the tetrahedron $T_1$ with vertices 
$0, e_1, e_2$ and $(A,B,c)$ where $A,B,c \in \Z$ and $\textrm{vol}(T)=|c/6|$. If $c<0$ we can compose $L_1$ with the unimodular transformation 
$$L_2((x,y,z))=(x,y,-z).$$
Consequently we can assume that $c >0$. We now use the division algorithm to express 
$$A = q_1c + a \textrm{ and } B = q_2c +b, 0\leq a,b < c.$$
By acting on $T_1$ by the unimodular transformation $$L_3((x,y,z))=(x-q_1z,y-q_2z,z)$$
we get that $T$ is equivalent to the tetrahedron $T_2$ with vertices $0,e_1,e_2$ and $(a,b,c)$. Since $T_2$ is a clean tetrahedron we invoke Corollary~\ref{clean-tet} 
 to conclude that $\gcd(a,c)=\gcd(b,c)=\gcd(d,c)=1.$
\end{proof}

We now turn to the proof of White's theorem. Our proof is arranged in four parts. These are as follows:
\begin{enumerate}
\item[Part 1:] We prove that the tetrahedron $T_{a,b,c}$ is empty if and only if a system of equations involving $a,b,d$ hold.
\item[Part 2:] This system of equations give an immediate proof of the $(\Leftarrow)$ direction of White's theorem.
\item[Part 3:] The proof of the $(\Rightarrow)$ direction of White's theorem is considerably more involved. We first develop a slight modification of the system of equations. This then leads 
us to define a finite set of arithmetic functions $f_n$. We then state and prove certain properties of these functions. 
\item[Part 4:] We use the properties of $f_n$ to complete the proof.
\end{enumerate}

We will invoke the following identity in several places
\begin{lemma}
Let $x\in \R$. If $x\not\in \Z$, then 
\begin{equation}\label{eq:keyid}
\langle -x \rangle  =  1-\langle x \rangle.
\end{equation}
We will typically invoke this identity in the following form:
\begin{equation}\label{eq:keyidfrac}
\left\langle\frac{kl}{c}\right\rangle + \left\langle\frac{k(c-l)}{c}\right\rangle = 1
\end{equation}
for $0< l < c, \gcd(l,c)=1, \textrm{ and } k=1,\ldots, c-1.$
\end{lemma}

\begin{proposition}
Let $c\in \Z$ with $c>1$ and let $T_{a,b,c}$ be a clean lattice tetrahedron. Then, $T_{a,b,c}$ is empty if and only if   
\begin{equation}\label{eq:sys-eq1}
\left \langle \frac{ka}{c} \right \rangle + \left \langle \frac{kb}{c} \right \rangle +\left \langle \frac{kd}{c} \right \rangle - \frac{k}{c} = 1
\end{equation}
holds for $k=1, \ldots, c-1$.
\end{proposition}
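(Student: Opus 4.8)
The plan is to parametrize the interior lattice points of $T_{a,b,c}$ by their third coordinate and to recognise the left side of \eqref{eq:sys-eq1} as a local obstruction to emptiness at each ``height''. Write a point of $T_{a,b,c}$ as $t_1e_1+t_2e_2+t_3(a,b,c)$ with $t_1,t_2,t_3\ge 0$ and $t_0:=1-t_1-t_2-t_3\ge 0$; it is an interior point precisely when $t_0,t_1,t_2,t_3>0$. A lattice point of this form is $(p_1,p_2,p_3)=(t_1+t_3a,\,t_2+t_3b,\,t_3c)$, so $t_3=p_3/c$, and an interior lattice point has $p_3=k$ for some $k\in\{1,\dots,c-1\}$; then $t_1=p_1-ka/c$, $t_2=p_2-kb/c$, and $t_0=1-p_1-p_2+k(a+b-1)/c$. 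Since $p_1$ and $p_2$ are unconstrained integers apart from $p_1>ka/c$ and $p_2>kb/c$, and their least admissible values are $p_1=\lfloor ka/c\rfloor+1=ka/c-\langle ka/c\rangle+1$ and similarly for $p_2$, I would first show that $T_{a,b,c}$ has an interior lattice point with third coordinate $k$ if and only if
\[
\langle ka/c\rangle+\langle kb/c\rangle>1+\frac kc .
\]

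Next I would bring in $d$ together with the cleanness hypothesis. By Corollary~\ref{clean-tet}, clean means $\gcd(a,c)=\gcd(b,c)=\gcd(d,c)=1$, so for $1\le k\le c-1$ none of $ka/c,kb/c,kd/c$ is an integer and each of $\langle ka/c\rangle,\langle kb/c\rangle,\langle kd/c\rangle$ lies strictly between $0$ and $1$. From $d\equiv 1-a-b\pmod c$ one has $\langle kd/c\rangle=\langle \tfrac kc-\tfrac{ka}{c}-\tfrac{kb}{c}\rangle$, and since $\tfrac kc-\langle ka/c\rangle-\langle kb/c\rangle$ lies in $(-2,1)$, determining which of the unit intervals $(-2,-1)$, $(-1,0)$, $(0,1)$ contains it shows that
\[
\delta_k:=\langle ka/c\rangle+\langle kb/c\rangle+\langle kd/c\rangle-\frac kc
\]
always lies in $\{0,1,2\}$, with $\delta_k=0$ iff $\langle ka/c\rangle+\langle kb/c\rangle<\tfrac kc$, $\delta_k=2$ iff $\langle ka/c\rangle+\langle kb/c\rangle>1+\tfrac kc$, and $\delta_k=1$ otherwise; the two borderline equalities are impossible exactly because $\gcd(d,c)=1$ forces $\langle kd/c\rangle\ne 0$. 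Comparing with the previous paragraph, $\delta_k=2$ holds if and only if $T_{a,b,c}$ has an interior lattice point of height $k$.

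To finish I would use the symmetry $k\mapsto c-k$: applying \eqref{eq:keyidfrac} gives $\langle(c-k)a/c\rangle=1-\langle ka/c\rangle$ and $\langle(c-k)b/c\rangle=1-\langle kb/c\rangle$, whence $\delta_{c-k}=2$ iff $\delta_k=0$. Consequently $T_{a,b,c}$ has no interior lattice point (equivalently, no $k$ with $\delta_k=2$) if and only if $\delta_k=1$ for every $k$, i.e. if and only if \eqref{eq:sys-eq1} holds for $k=1,\dots,c-1$. Alternatively, $\gcd(a,c)=1$ gives $\sum_{k=1}^{c-1}\langle ka/c\rangle=\tfrac{c-1}2$, and likewise for $b$ and $d$, so $\sum_{k=1}^{c-1}\delta_k=c-1$; since each $\delta_k\in\{0,1,2\}$, all of them equal $1$ precisely when none equals $2$. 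As $T_{a,b,c}$ is clean, it is empty exactly when it has no interior lattice point, and the proposition follows.

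The step I expect to be the main obstacle is the middle paragraph: correctly locating $\tfrac kc-\langle ka/c\rangle-\langle kb/c\rangle$ among the unit intervals and verifying that the condition $\gcd(d,c)=1$ is precisely what excludes the borderline values, so that the trichotomy for $\delta_k$ and its dictionary with interior lattice points are both exact. Everything else is the elementary geometry of the simplex in the first paragraph and the reflection identity \eqref{eq:keyidfrac} already recorded above.
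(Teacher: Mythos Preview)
Your argument is correct. The barycentric computation in the first paragraph is right: with $p_1=\lfloor ka/c\rfloor+1$ and $p_2=\lfloor kb/c\rfloor+1$ one gets $t_1=1-\langle ka/c\rangle>0$, $t_2=1-\langle kb/c\rangle>0$, and $t_0=\langle ka/c\rangle+\langle kb/c\rangle-1-k/c$, and any larger $p_1,p_2$ only shrinks $t_0$; so an interior lattice point at height $k$ exists iff $\langle ka/c\rangle+\langle kb/c\rangle>1+k/c$. The trichotomy for $\delta_k$ follows exactly as you say from $\delta_k=-x+\langle x\rangle$ with $x=k/c-\langle ka/c\rangle-\langle kb/c\rangle\in(-2,1)$, and $\gcd(d,c)=1$ rules out $x\in\Z$. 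The reflection $\delta_{c-k}=2-\delta_k$ (or, more slickly, your averaging alternative $\sum_k\delta_k=c-1$) then forces all $\delta_k=1$ once none equals $2$.

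The paper's proof reaches the same endpoint by a slightly different route. It lists the $c-1$ interior lattice points of the fundamental parallelepiped $P$ spanned by $e_1,e_2,(a,b,c)$ and asserts that $T_{a,b,c}$ is empty iff each of them satisfies the \emph{two-sided} bound $1<\langle k(c-a)/c\rangle+\langle k(c-b)/c\rangle+k/c<2$; this already bakes in the central symmetry of $P$ (equivalently your $k\leftrightarrow c-k$). After rewriting via \eqref{eq:keyid} this becomes $0<\langle ka/c\rangle+\langle kb/c\rangle-k/c<1$, and the paper then observes that this quantity is congruent mod $\Z$ to $\langle k(a+b-1)/c\rangle$, hence equal to it, giving \eqref{eq:sys-eq1} directly. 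So where the paper gets a double inequality up front and turns it into an equality by a mod-$\Z$ coincidence, you obtain only the one-sided obstruction $\delta_k\neq 2$ from the geometry and then invoke the symmetry (or the sum $\sum\delta_k=c-1$) to exclude $\delta_k=0$ as well. Your averaging alternative is a nice bonus: it shows that the reflection identity is not even needed at this stage.
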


\begin{proof}[Proof of Part 1]
Let $P$ denote the parallelepiped spanned by $e_1, e_2$ and $(a,b,c)$. Since $\textrm{volume}(P)=c$ and the faces of $P$ are empty lattice parallelograms, we infer that $P$ contains $(c-1)$ lattice points in its interior. These lattice points are 
\begin{equation}\label{eq:sys-eq-lat-pt}
\left\langle\frac{k(c-a)}{c}\right\rangle(1,0,0) + \left\langle\frac{k(c-b)}{c}\right\rangle(0,1,0) + \frac{k}{c} (a,b,c)
\end{equation}
with $k=1,\ldots,c-1$. 

$T_{a,b,c}$ is empty if and only if 
$$ 1 < \left\langle\frac{k(c-a)}{c}\right\rangle + \left\langle\frac{k(c-b)}{c}\right\rangle + \frac{k}{c} < 2,$$
for $k=1,\ldots, c-1.$ 
Some algebraic manipulation in conjunction with identity~\eqref{eq:keyid} gives the the system of inequalities 
$$ 0 < \left\langle\frac{ka}{c}\right\rangle + \left\langle\frac{kb}{c}\right\rangle - \frac{k}{c} < 1,$$ 
for $k=1,\ldots, c-1.$ We now observe that 
\begin{equation} 
\left\langle\frac{ka}{c}\right\rangle + \left\langle\frac{kb}{c}\right\rangle - \frac{k}{c} \equiv \left\langle\frac{k(a +b-1)}{c}\right\rangle \pmod{\Z},
\end{equation}
for $k=1,\ldots, c-1.$ Since both sides of the congruence are between 0 and 1, we conclude that we have a system of \emph{equalities}
$$\left\langle\frac{ka}{c}\right\rangle + \left\langle\frac{kb}{c}\right\rangle - \frac{k}{c} = \left\langle\frac{k(a +b-1)}{c}\right\rangle,$$
for $k=1,\ldots, c-1.$ After a little more algebraic manipulation we conclude that $T_{a,b,c}$ is empty if and only if 
$$ \left \langle \frac{ka}{c} \right \rangle + \left \langle \frac{kb}{c} \right \rangle +\left \langle \frac{kd}{c} \right \rangle - \frac{k}{c} = 1$$
for $k=1,\ldots,c-1$.
\end{proof}

We can now easily prove $(\Leftarrow)$ direction of White's theorem. The system of equations~\eqref{eq:sys-eq1} in conjunction with the system of 
identities~\eqref{eq:keyidfrac} allow us to conclude that the following tetrahedra are empty.

\begin{cor}
Let $\gcd(a,c)=1$. Then the tetrahedra $T_{1,a,c}$ and $T_{a,c-a,c}$ are empty.
\end{cor}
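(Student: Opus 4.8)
The plan is to reduce each assertion to the system of equations~\eqref{eq:sys-eq1} established in Part~1 and then to verify that system by a one-line computation with the identity~\eqref{eq:keyidfrac}. Throughout I take $c>1$ (the case $c=1$ being the standard simplex, which is visibly empty) and I replace $a$ by its residue modulo $c$, so that $1\le a\le c-1$; this is legitimate since $\gcd(a,c)=1$, and it also ensures that the parameters of $T_{1,a,c}$ and of $T_{a,c-a,c}$ lie in $[0,c)$ as the notation $T_{a,b,c}$ requires.

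First I would check that both tetrahedra are clean. For $T_{1,a,c}$ the associated integer is $d=(1-1-a)\bmod c=c-a$, and for $T_{a,c-a,c}$ it is $d=(1-a-(c-a))\bmod c=1$. In either case the three relevant greatest common divisors are $\gcd(\,\cdot\,,c)$ applied to members of $\{1,a,c-a\}$, each of which equals $1$ because $\gcd(c-a,c)=\gcd(a,c)=1$; hence Corollary~\ref{clean-tet} gives cleanness and the Proposition of Part~1 applies.

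Next, fix $k$ with $1\le k\le c-1$ and evaluate the left-hand side of~\eqref{eq:sys-eq1}. For $T_{1,a,c}$ it equals $\langle k/c\rangle+\langle ka/c\rangle+\langle k(c-a)/c\rangle-k/c$; since $0<k/c<1$ we have $\langle k/c\rangle=k/c$, which cancels the subtracted $k/c$, while the remaining two terms sum to $1$ by~\eqref{eq:keyidfrac} (applicable because $\gcd(a,c)=1$). For $T_{a,c-a,c}$ the expression is $\langle ka/c\rangle+\langle k(c-a)/c\rangle+\langle k/c\rangle-k/c$, which is the same up to the order of the summands, so it again equals $1$. Thus~\eqref{eq:sys-eq1} holds for every $k$, and Part~1 yields that both tetrahedra are empty.

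I do not expect any real obstacle here: the substance is entirely in Part~1 and in the identity~\eqref{eq:keyidfrac}, and the only thing to be careful about is the bookkeeping around the definition of $d$ and the reduction of $a$ modulo $c$, so that the hypotheses $0\le a,b<c$ and $\gcd(a,c)=1$ are both literally in force where they are used. I would close by remarking that $T_{a,c-a,c}$ is precisely a clean tetrahedron with $d=1$ and $T_{1,a,c}$ one with first parameter $1$, so that, together with the evident unimodular symmetry interchanging the two flat coordinates (which sends $T_{a,b,c}$ to $T_{b,a,c}$), this corollary already establishes the $(\Leftarrow)$ direction of Theorem~\ref{2nd theorem}.
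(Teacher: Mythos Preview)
Your argument is correct and is exactly the approach the paper intends: it, too, derives the corollary directly from the system~\eqref{eq:sys-eq1} together with the identity~\eqref{eq:keyidfrac}, and your write-up simply makes explicit the cancellation $\langle k/c\rangle=k/c$ and the pairing $\langle ka/c\rangle+\langle k(c-a)/c\rangle=1$ that the paper leaves to the reader. Your attention to the bookkeeping (reducing $a$ modulo $c$, computing $d$ in each case, and invoking Corollary~\ref{clean-tet} for cleanness) is appropriate and fills in details the paper omits.
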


To prove the $(\Rightarrow)$ direction of White's theorem we will work with a modification of~\eqref{eq:sys-eq1}. Define 
a set of arithmetic functions $f_n$ for $n\in \Z^+, n < c$ and $\gcd(n,c)=1$,  
$$f_n: \{1, \ldots, c-2 \} \rightarrow \{0,1\}$$ 
via
\begin{equation}
f_n(k) = \left \langle \frac{kn}{c} \right \rangle - \left \langle \frac{(k+1)n}{c} \right \rangle + \frac{n}{c} =\left[\frac{(k+1)n}{c}\right] - \left[\frac{kn}{c}\right].
\end{equation}

From~\eqref{eq:sys-eq1} we obtain the system of equations
\begin{equation}\label{eq:sys-eq2}
f_a(k)+f_b(k)+f_d(k) +\frac{1}{c} = \frac{a+b+d}{c}, 
\end{equation}
for $k=1,\ldots, c-2$. We now look at the case of $k=1$ in~\eqref{eq:sys-eq1} which shows that 
$$\frac{a+b+d}{c}= 1+\frac{1}{c}.$$ 
Thus we can rewrite~\eqref{eq:sys-eq2} as the system of equations 
\begin{equation}\label{eq:sys-eq3}
f_a(k)+f_b(k)+f_d(k) = 1, 
\end{equation}
for $k=1,\ldots,c-2.$ We will work with this system~\eqref{eq:sys-eq3} in conjunction with the properties of $f_n$ to 
arrive at a proof of White's theorem.

\begin{proposition}
The function $f_n$ has the following properties.
\begin{enumerate}
\item[(i)] $f_1^{-1}(\{1\}) =\emptyset.$
\item[(ii)] For $n >1$, $$f_n^{-1}(\{1\}) = \left\{ \,\left[kc/n \right] : k=1,\ldots, n-1 \right\}. $$
\item[(iii)] $f_{c-n} =1-f_n$.
\end{enumerate}
\end{proposition}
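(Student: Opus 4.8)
The plan is to work throughout with the floor form of the definition, $f_n(k)=[(k+1)n/c]-[kn/c]$. Since $0<n<c$ we have $0\le (k+1)n/c-kn/c=n/c<1$, so the non-decreasing integer sequence $[kn/c]$ jumps by exactly $0$ or $1$ at each step; this re-proves that $f_n$ is $\{0,1\}$-valued and, more usefully, shows that $f_n(k)=1$ if and only if the interval $(kn/c,(k+1)n/c\,]$ contains an integer, i.e. if and only if there is an integer $m$ with $kn<mc\le(k+1)n$. Telescoping the sum gives $\sum_{k=0}^{c-1}f_n(k)=[cn/c]-[0]=n$, while $f_n(0)=[n/c]=0$ and $f_n(c-1)=n-[(c-1)n/c]=n-(n-1)=1$; hence $\sum_{k=1}^{c-2}f_n(k)=n-1$, so $\#\,f_n^{-1}(\{1\})=n-1$. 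Part (i) is then immediate: for $n=1$ every $[k/c]=0$ with $0\le k\le c-1$, so $f_1\equiv 0$ on $\{1,\dots,c-2\}$ and $f_1^{-1}(\{1\})=\varnothing$.

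For part (ii), fix $n>1$ and set $k_j=[jc/n]$ for $j=1,\dots,n-1$. First, each $k_j$ lies in $f_n^{-1}(\{1\})$: since $\gcd(n,c)=1$ and $0<j<n$, the number $jc/n$ is not an integer, so $k_j<jc/n<k_j+1$, and multiplying by $n$ gives $k_jn<jc<(k_j+1)n$, so $m=j$ satisfies the criterion above. Second, the $k_j$ are pairwise distinct and lie in $\{1,\dots,c-2\}$: the map $j\mapsto jc/n$ is strictly increasing with consecutive gaps $c/n>1$, and two reals differing by more than $1$ have distinct floors, so $k_1<k_2<\dots<k_{n-1}$; moreover $k_1=[c/n]\ge 1$ since $c/n>1$, and $k_{n-1}=[(n-1)c/n]=[c-c/n]\le c-2$, again because $c/n>1$. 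Thus $\{k_1,\dots,k_{n-1}\}$ is a set of $n-1$ distinct points of $f_n^{-1}(\{1\})$, and by the count from the first paragraph it exhausts $f_n^{-1}(\{1\})$.

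For part (iii), apply identity~\eqref{eq:keyid}. For every $k\in\{1,\dots,c-2\}$ both $kn/c$ and $(k+1)n/c$ are non-integers (as $\gcd(n,c)=1$ and $1\le k<k+1\le c-1$), so $\langle k(c-n)/c\rangle=\langle -kn/c\rangle=1-\langle kn/c\rangle$ and $\langle (k+1)(c-n)/c\rangle=1-\langle (k+1)n/c\rangle$. Substituting these, together with $(c-n)/c=1-n/c$, into $f_{c-n}(k)=\langle k(c-n)/c\rangle-\langle (k+1)(c-n)/c\rangle+(c-n)/c$ and cancelling leaves $1-\left(\langle kn/c\rangle-\langle (k+1)n/c\rangle+n/c\right)=1-f_n(k)$, as desired. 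The manipulations in (i) and (iii) carry no risk; the one step that needs genuine care is (ii), namely verifying that the $n-1$ candidate values $[jc/n]$ are genuinely distinct and all land in the domain $\{1,\dots,c-2\}$, so that the telescoping count pins down $f_n^{-1}(\{1\})$ exactly rather than merely bounding its size.
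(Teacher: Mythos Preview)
Your proof is correct. Parts (i) and (iii) coincide with the paper's argument essentially verbatim. For part (ii) the paper proceeds by a direct double inclusion: it shows that any $l$ with $f_n(l)=1$ must equal $[kc/n]$ for some $k$, and conversely that each $[kc/n]$ satisfies $f_n([kc/n])=1$. You instead prove only the latter inclusion and then invoke the telescoping count $\#\,f_n^{-1}(\{1\})=n-1$ to force equality. This is a mild but genuine variation; your route has the minor advantage of making explicit that the $n-1$ candidates $[jc/n]$ are pairwise distinct and actually lie in the domain $\{1,\dots,c-2\}$, details the paper's proof glosses over.
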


\begin{proof}
For $k=1,\ldots,c-2$, 
$$f_1(k)= \left[\frac{k+1}{c}\right] - \left[\frac{k}{c}\right] = 0-0 = 0,$$
which proves (i).

We now prove statement (ii). If $l \in f_n^{-1}(\{1\})$ then there exists $k \in \Z^{+}$ such that 
$$ \frac{ln}{c} < k < \frac{(l+1)n}{c}.$$
It follows that $l = [kc/n].$ Conversely, if $l = [kc/n]$ for some integer $k$, with $1 \leq k \leq n-1$, then 
we have that 
$$l < \frac{kc}{n} < l+1.$$
We now obtain that 
$$ \frac{ln}{c} < k < \frac{(l+1)n}{c}$$
and consequently $l \in f_n^{-1}(\{1\})$. 

Statement (iii) is a consequence of identity~\eqref{eq:keyidfrac}.
\begin{eqnarray*}
f_{c-n}(k) & = & \left \langle \frac{k(c-n)}{c} \right \rangle - \left \langle \frac{(k+1)(c-n)}{c} \right \rangle + \frac{c-n}{c} \\ 
& = & 1- \left \langle \frac{kn}{c} \right \rangle -1 + \left \langle \frac{(k+1)n}{c} \right \rangle + 1-\frac{n}{c} \\
& = & 1-f_n(k).
\end{eqnarray*}
\end{proof}

We now complete the proof of White's theorem.

\begin{proof}[Proof of Part 3]
Let $T_{a,b,c}$ be an empty tetrahedron with $c \ge 2$. We want to prove that either $a=1$ or $b=1$ or $d=1$. We will argue by contradiction. So we assume that $a,b,d\ge 2$. Consequently none of the sets 
$f^{-1}_a(\{1\})$,  $f^{-1}_b(\{1\})$, $f^{-1}_d(\{1\})$ are empty. Since 
$$ f_a + f_b + f_d =1,$$ 
can infer that $a,b$ and $d$ are distinct integers and the sets $$f_a^{-1}(\{1\}),  f_b^{-1}(\{1\}),f_d^{-1}(\{1\})$$
are pairwise disjoint. ({\bf Spoiler alert:} Our argument hinges crucially on the fact that $f_b^{-1}(\{1\})\cap f_d^{-1}(\{1\}) = \emptyset.$) 
Without loss of generality we can assume that $a > b > d.$ It follows that $1 \in f_a^{-1}(\{1\})$, and consequently $1\not\in \left( f_b^{-1}(\{1\}) \cup f_d^{-1}(\{1\}) \right)$. 
We now have that $$f_b + f_d = f_{c-a}$$ and consequently $$\left( f_b^{-1}(\{1\}) \cup f_d^{-1}(\{1\}) \right) = f_{c-a}^{-1}(\{1\}),$$ that is, 
$$ \left\{ \,\left[kc/b \right] : k=1,\ldots, b-1 \right\} \cup \left\{ \,\left[kc/d \right] : k=1,\ldots, d-1 \right\} $$
$$=\left\{ \,\left[kc/(c-a) \right] : k=1,\ldots, (c-a-1) \right\}.$$
We now compare the smallest and largest elements in each of the 3 sets. Since $b > d \ge 2$ and $1\not\in f_{c-a}^{-1}(\{1\})$, we have that 
$$2 \le \left[\frac{c}{c-a} \right] = \left[ \frac{c}{b} \right] < \left[ \frac{c}{d} \right] \le  \left[ \frac{(d-1)c}{d} \right] < 
\left[ \frac{(b-1)c}{b} \right] = \left[ \frac{(c-a-1)c}{c-a} \right].$$
We remark that the strict inequalities occur since 
$$f_b^{-1}(\{1\})\cap f_d^{-1}(\{1\}) = \emptyset.$$

Let $s$ be the positive integer such that 
$$\left[ \frac{c}{d} \right] = \left[ \frac{sc}{c-a} \right].$$
We now obtain that 
$$\left[ \frac{(s-1)c}{c-a} \right] = \left[ \frac{(s-1)c}{b} \right] \textrm{ and } \left[ \frac{(s+1)c}{c-a} \right] \le \left[ \frac{sc}{b} \right].$$
Combining these two observations we get 
$$\left[ \frac{(s+1)c}{c-a} \right] - \left[ \frac{(s-1)c}{c-a} \right] \le \left[ \frac{sc}{b} \right] - \left[ \frac{(s-1)c}{b} \right],$$
which implies the inequality 
$$ 2\left[ \frac{c}{c-a} \right] \le \left[ \frac{c}{b} \right] +1.$$
This leads to the contradiction that 
$$ \left[ \frac{c}{c-a} \right] \le 1,$$
and consequently our assumption that $a,b,d \ge 2$ is false.
\end{proof}

\begin{proof}[Proof of Corollary~\ref{coplanar}] Let $T_{a,b,c}$ be empty, with $c>1$. By Theorem~\ref{2nd theorem} we have that either $a=1$ or $b=1$ or $b=c-a$. 
If $a=1$, then by replacing $a$ by 1 in~\eqref{eq:sys-eq-lat-pt} we see that the $x$ co-ordinate of each lattice point inside $P_{1,b,c}$ equals 1. The same argument works if $b=1$. The 
only case that needs a little more work is if $b=c-a$. In this case~\eqref{eq:sys-eq-lat-pt} becomes
\begin{equation}
\left\langle\frac{k(c-a)}{c}\right\rangle(1,0,0) + \left\langle\frac{ka}{c}\right\rangle(0,1,0) + \frac{k}{c} (a,c-a,c).
\end{equation}
If we now add the $x$ and $y$ co-ordinates and subtract the $z$ co-ordinate we get 
$$\left\langle\frac{k(c-a)}{c}\right\rangle + \left\langle\frac{ka}{c}\right\rangle + \frac{ka}{c} +\frac{k(c-a)}{c}-k = \left\langle\frac{k(c-a)}{c}\right\rangle + \left\langle\frac{ka}{c}\right\rangle.$$
We now invoke the identities~\eqref{eq:keyidfrac} to conclude that the RHS equals 1.
\end{proof}

\end{document}